\title{Extending a Morse function to a non-orientable $3-$manifold}
\date{}
\author{Clément Laroche\\supervised by Thomas Fiedler}
\providecommand{\keywords}[1]{\textbf{\textit{Keywords---}} #1}
\newcommand\Kl{Kl_s^3}
\newcommand\paragraphtitle[1]{\begin{center}\textbf {#1}\end{center}}
\newcounter{refcounter}
\newenvironment{proofthm}[1][\unskip]{\noindent\hskip\labelsep {\itshape \bfseries Proof of Theorem #1:}}{\vspace{-\baselineskip}\begin{flushright}{$\square$}\end{flushright}}
\newenvironment{proof}{\noindent\hskip\labelsep {\itshape \bfseries Proof:}}{\begin{flushright}{$\square$}\end{flushright}}
\newenvironment{definition}[1][\unskip]{\begin{trivlist}
\item[\hskip \labelsep {\bfseries Definition #1:}]}{\end{trivlist}}
\newenvironment{property}[1][\unskip]{\begin{trivlist}
\item[\hskip \labelsep {\bfseries Property #1:}]}{\end{trivlist}}
\newenvironment{proposition}[1][\unskip]{\begin{trivlist}
\item[\hskip \labelsep {\bfseries Proposition #1:}]}{\end{trivlist}}
\newenvironment{lemma}[1][\unskip]{\begin{trivlist}
\item[\hskip \labelsep {\bfseries Lemma #1:}]}{\end{trivlist}}
\newenvironment{theorem}[1][\unskip]{\begin{trivlist}
\item[\hskip \labelsep {\bfseries Theorem #1:}]}{\end{trivlist}}
\newenvironment{examples}{\begin{trivlist}
\item[\hskip \labelsep {\bfseries Examples:}]}{\end{trivlist}}
\newenvironment{remark}[1][\unskip]{\begin{trivlist}
\item[\hskip \labelsep {\bfseries Remark #1:}]}{\end{trivlist}}
\newtheorem{definition}{Definition}
\newtheorem{proposition}{Proposition}
\newtheorem{lemma}{Lemma}
\newtheorem{theorem}{Theorem}
\def\changemargin#1#2{\list{}{\rightmargin#2\leftmargin#1}\item[]}
\let\endchangemargin=\endlist 
\DeclareMathOperator{\Ker}{Ker}
\DeclareMathOperator{\Image}{Im}
\begin{document}
\maketitle{}
\begin{abstract}
Considering a solid 3-dimensional Klein bottle and a collaring of its boundary, can we extend a generic $C^\infty$ non-singular  function defined on the collaring to the full solid Klein bottle without critical points? We give a condition on the Reeb graph of the given function that is necessary and sufficient for the existence of such a non-singular extension.
\end{abstract}
\keywords{Morse theory, Klein bottle, 3-manifold}
\large

\paragraphtitle{Introduction}

By a \emph{(generic) Morse function} on a closed manifold, we mean a real-valued function with non-degenerate critical points and with distinct critical values. By a \emph{(generic) non-singular Morse function on a manifold with boundary}, we mean a real-valued function without any critical points and for which the restriction to the boundary is a generic Morse function.

Let $\Kl$ be the solid Klein bottle and let the usual Klein bottle $Kl^2$ be its boundary.
The problem is thus stated as  following: given the germ of a Morse function $f$ along $Kl^2$, is there a non-singular Morse function $F$ extending $f$ to $\Kl$ ?

We will also discuss the following weaker version of this problem: given the germ of a Morse function $f$ along a closed non-orientable manifold $M^2$, is there any compact manifold $N^3$ whose boundary is $M^2$ and a non-singular Morse function $F$ extending $f$ to $N^3$?

\vspace{0.5cm}
It is well known that extension problems with prescribed topology are usually very difficult. However, C. Curley\cite{Curley} has solved the above  problem in the case of the 2-sphere bounding the 3-ball. The same problem one dimension lower was solved by Blank and Laudenbach\cite{BlankLaudenbach}. The difficulty is the following: given the germ $f$ on the collaring we could always extend $f$ to a Morse function  on the 3-manifold. However, we should now try to cancel the critical points of the Morse function without changing it near the boundary. This runs into the usual problems in low-dimensional topology (mainly the failure of the Whitney trick) and has no chance at all to work . Therefore we construct  extensions on elementary slices of 3-manifolds which we glue then together instead of the whole extension. We can control the homotopy type of the 3-manifold under the gluing, but in order to fix the 3-manifold up to homeomorphism we have to use Perelman's theorem. (Curley's result was long before Perelman and he had to use instead Schönfliess theorem , which is still just known up to the 2-sphere in the 3-sphere). It is well known that there are e.g. 3-dimensional lens spaces which are homotopy equivalent but not homeomorphic. Let us take out a small 3-ball from such a lens space. It is clear  that our method  would not allow  to decide whether a germ $f$ on a collaring of the 2-sphere can be extended non-singularly to the lens space. So, it remains a challenging problem to extend our result to other 3-manifolds.

The non-singular extension problem of germs for spheres in arbitrary dimensions to the ball was studied by S. Barannikov\cite{Barannikov}. He has given a necessary condition for the existence of an extension. Valentin Seigneur has proven that Barannikov's condition is not sufficient for the circle and for the 2-sphere (unpublished Master thesis 2013 at the Université Paul Sabatier, Toulouse). 

M. Yamamoto\cite{Yamamoto} has given a necessary and sufficient condition for a germ $f$ on a collaring of the circle to extend to a non-singular function $F$ on a given surface $V$, but such that $F$ is induced by a height function on an immersion of $V$ into $\mathbb{R}^2$. Le Van Tu has proven that there are germs which can be extended in the sense of Blank-Laudenbach but which can not be extended in the stronger sense of Yamamoto (unpublished Master thesis 2016 at the Université Paul Sabatier, Toulouse). 

Finally, let us mention that our extension result is the first one in the case of non-orientable manifolds and involves interesting new features in the construction.

\vspace{0.5cm}
\begin{definition}
The \emph{Reeb graph} of a Morse function $f$, noted $dgm(f)$ (standing for « diagram of $f$ »), is the quotient space $M^2 / \sim$ where $x\sim y$ iff\\
$\left\{\begin{array}{ll}
f(x)=f(y)\\
x \text{ and } y \text{ belong to the same connected component of }f^{-1}(f(x))
\end{array}\right.$

\vspace{0.25cm}
The vertices of the Reeb graph are the classes of the critical points.

Furthermore, if $f$ is the germ of a Morse function along a manifold $M^2$, its \emph{labelled graph}, noted $dgm^+(f)$, is its Reeb graph with each vertex labelled "$+$" if $\langle \mathrm{d}f(p), \nu\rangle > 0$ or "$-$" if $\langle \mathrm{d}f(p), \nu\rangle < 0$ where $p$ is the corresponding critical point and $\nu$ is an outward-pointing normal at $p$.
\end{definition}

For the rest of the article, $f$ is the germ of a Morse function along a manifold $M^2$.

The vertices of $dgm(f)$ can be of the following form:

\begin{figure}[H]
\centering
\caption{\label{DgmVert} Vertex Configurations of dgm(f)}
\includegraphics[width=380px]{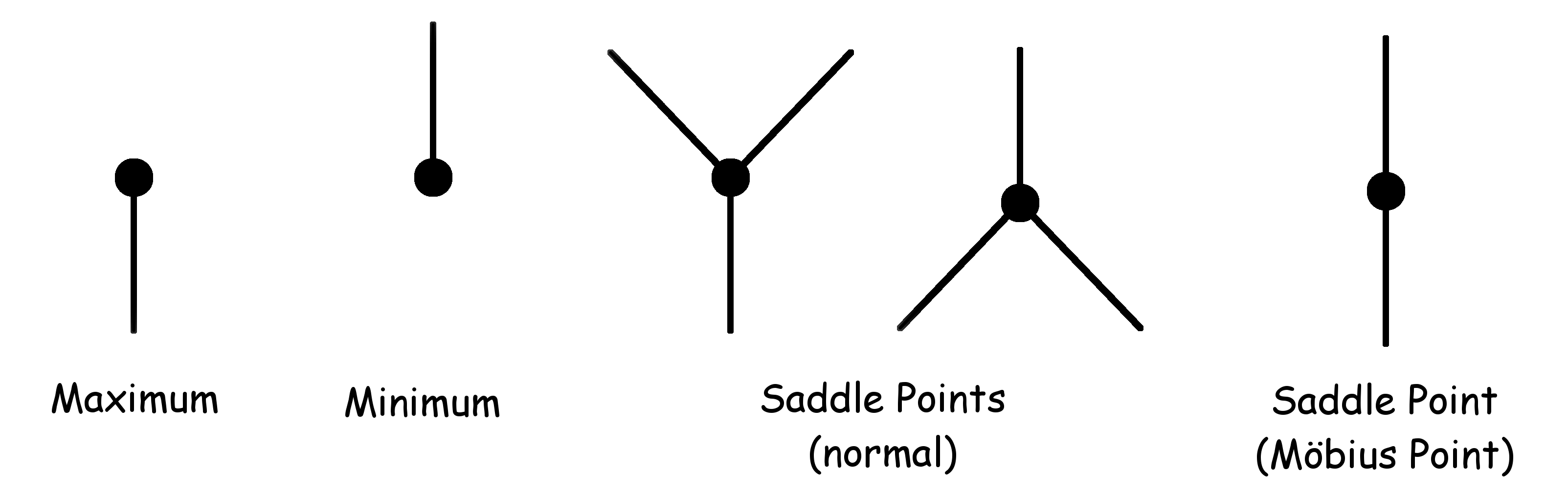}
\end{figure}

The Möbius points are the ones arising from the non-orientability of the manifold, though there exists germs of Morse functions along non-orientable manifolds with no such points. Their effect on the level surface of an extension is to create or remove a Möbius strip. That means that their presence on the Reeb graph of $f$ implies the existence of non-orientable level surfaces for any extension $F$. We will see that the converse is false in general.

\begin{wrapfigure}[18]{r}[38pt]{160px}
\centering
\caption{\label{CollapseSample} Z Coordinate as an Extendable Function\\Note that the level surfaces of the extension are all orientable discs}
\includegraphics[width=160px]{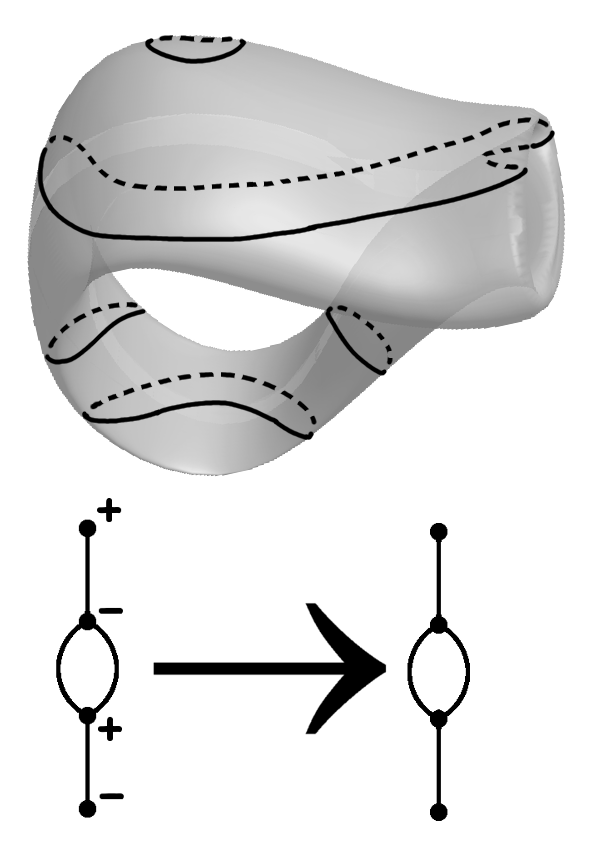}
\end{wrapfigure}

\begin{definition}
A \emph{collapse} of $dgm^+(f)$ is a surjective map $C:dgm^+(f) \rightarrow dgm(h)$ onto another diagram commuting with the height function and bijectively mapping the vertices of $dgm^+(f)$ onto those of $dgm(h)$.
\end{definition}

C. Curley proved\cite{Curley} that when the manifold is orientable, the existence of a non-singular extension is equivalent to the existence of a "nice" collapse of the labelled graph, that is, a particular mapping to another graph (the point is that this other graph is the Reeb graph of the sought extension). Our contribution here is to generalize his results to the non-orientable case.

If $F$ is an extension, we note $dgm(F)$ its Reeb graph. The "nice" collapse needs to verify different kind of properties to guarantee that it is indeed a collapse onto the Reeb graph of an extension:
\begin{itemize}
\item restrictions around the vertices, indicating the local behavior of the Morse function around the critical points (depicted in the table below),
\item a "loop resolution" property, guaranteeing that the non-trivial loops of level surfaces that may be created by some critical points are properly cleaned by other critical points,
\item an "orientability consistency" property, guaranteeing that the orientability of level surfaces does not need to change between two critical values,
\item in the case of the Klein bottle problem, an evidence that the fundamental group of the domain of definition of $F$ is $\mathbb{Z}$ and thus that it is indeed a solid Klein bottle.
\end{itemize}

\paragraphtitle{The Nessecary and Sufficient Conditions}

\begin{wrapfigure}[12]{r}[34pt]{180px}
\centering
\caption*{\label{Tab1} Local Collapses}
\includegraphics[width=200px]{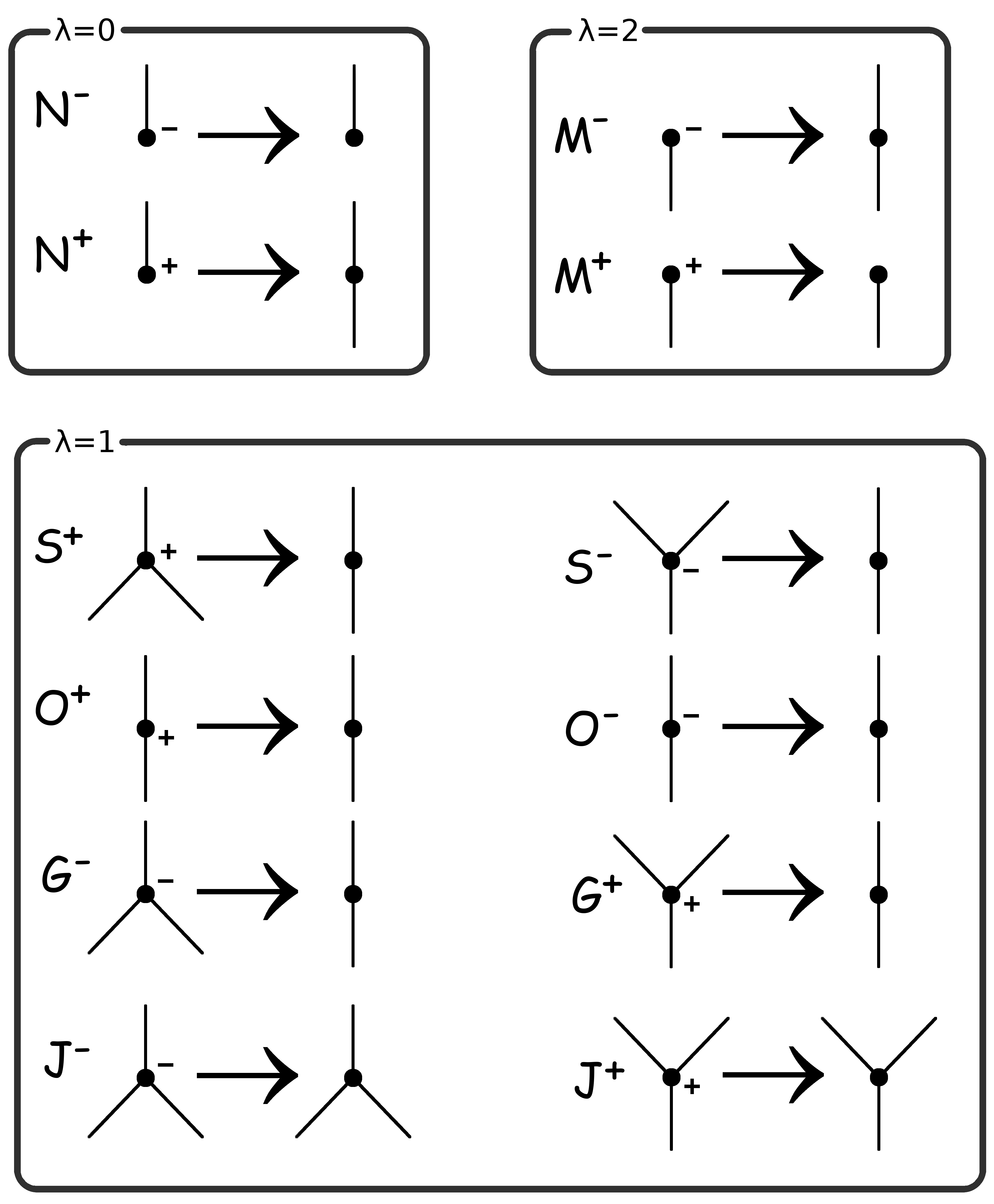}
\end{wrapfigure}

\hspace{0pt}
\begin{definition}
A collapse $C$ is said to be \emph{locally allowable} if it satisfies the restrictions on the vertices listed in the following table ($\lambda$ is the Morse index of the critical point for $f_{|M^2}$). Furthermore, in the cases $J^+$ and $J^-$, $C$ must be a local homeomorphism in a neighborhood of the vertices.
\end{definition}


\vspace{0.25cm}
We recall a version of the compact connected surface classification Theorem, which will be of great use to keep track of modifications:

\begin{theorem}\cite{Hirsch}\refstepcounter{refcounter}\label{ThmSurfaceClass}
A compact connected surface $S$ with $b$ boundary components ($b>0$) is diffeomorphic to either one of the following surfaces:
\begin{enumerate}[label={(\arabic*)}]
\item a closed disc with $b-1$ open discs removed from it and $h$ handles attached to it ($h \ge 0$) such that it is orientable,
\item the connected sum of an orientable surface of type (1) and a projective plane $\mathbb{P}^2(\mathbb{R})$,
\item the connected sum of an orientable surface of type (1) and a Klein bottle $Kl^2$.
\end{enumerate}
\end{theorem}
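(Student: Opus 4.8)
The plan is to reduce the statement to the classification of \emph{closed} connected surfaces, which I will take as known. First I would cap off each of the $b$ boundary circles of $S$ by gluing a closed disc along it, producing a closed connected surface $\widehat{S}$. Since each boundary component is a circle and any two gluings of a disc along a circle differ by a diffeomorphism of the disc, $\widehat{S}$ is well defined up to diffeomorphism, and $S$ is recovered from $\widehat{S}$ by deleting $b$ disjoint open discs. Note that orientability is preserved under both capping and deletion, since these operations only modify the surface inside embedded discs, so the orientable/non-orientable dichotomy for $\widehat{S}$ matches that of $S$.

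Next I would apply the closed classification to $\widehat{S}$. If $\widehat{S}$ is orientable it is a sphere with $h \ge 0$ handles; otherwise it is a sphere with $k \ge 1$ crosscaps, i.e. a connected sum of $k$ copies of $\mathbb{P}^2(\mathbb{R})$. To land in exactly the three listed normal forms I would invoke Dyck's relation $\mathbb{P}^2(\mathbb{R}) \# \mathbb{P}^2(\mathbb{R}) \# \mathbb{P}^2(\mathbb{R}) \cong T^2 \# \mathbb{P}^2(\mathbb{R})$, which trades a pair of crosscaps for a handle at the cost of keeping one crosscap. Hence a non-orientable $\widehat{S}$ is diffeomorphic to a closed orientable surface connect-summed with a single $\mathbb{P}^2(\mathbb{R})$ when $k$ is odd, and with $\mathbb{P}^2(\mathbb{R}) \# \mathbb{P}^2(\mathbb{R}) \cong Kl^2$ when $k$ is even.

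Finally I would restore the boundary. Deleting one open disc from the closed orientable surface of genus $h$ yields a disc with $h$ handles; deleting the remaining $b-1$ open discs yields a disc with $b-1$ holes and $h$ handles, which is precisely form (1). Applying the same deletion to the two non-orientable connected sums produces forms (2) and (3), because the connected sum is performed in the interior, away from the deleted discs, so deletion and connected sum commute and the boundary count is unaffected. (Alternatively, since the ambient paper is Morse-theoretic, one could instead run the argument directly on $S$ via a handle decomposition coming from a Morse function adapted to the boundary; I prefer the capping approach as it isolates the one nontrivial input.)

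The step I expect to be the main obstacle is making the capping and deletion operations rigorous up to diffeomorphism, specifically the claim that deleting $b$ disjoint open discs from $\widehat{S}$ returns a surface diffeomorphic to $S$ independently of the chosen discs. This rests on the uniqueness up to ambient isotopy of embedded discs in a connected surface (the disc theorem), together with the fact that an ambient isotopy of $\widehat{S}$ carrying one family of discs onto another restricts to a diffeomorphism of the complements. The remaining bookkeeping --- matching $k$'s parity with cases (2) and (3) and tracking the genus $h$ of the orientable summand --- is routine once Dyck's relation is in hand.
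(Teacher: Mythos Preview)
The paper does not prove this theorem at all; it is quoted from Hirsch and used as a black box, so there is no ``paper's own proof'' to compare against. Your capping-off argument is the standard derivation of the bounded classification from the closed one and is correct: cap the $b$ boundary circles, apply the closed classification, use Dyck's relation to normalise the non-orientable case to either one or two crosscaps on top of an orientable summand, and then remove $b$ discs. Your identification of the disc theorem (uniqueness of embedded discs up to ambient isotopy in a connected surface) as the point requiring care is exactly right; once that is granted, the rest is bookkeeping.
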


\begin{wrapfigure}[9]{l}[34pt]{210px}
\captionsetup{justification=centering}
\centering
\caption{\label{FigSurfaceClass} Surface classes}
\includegraphics[width=200px]{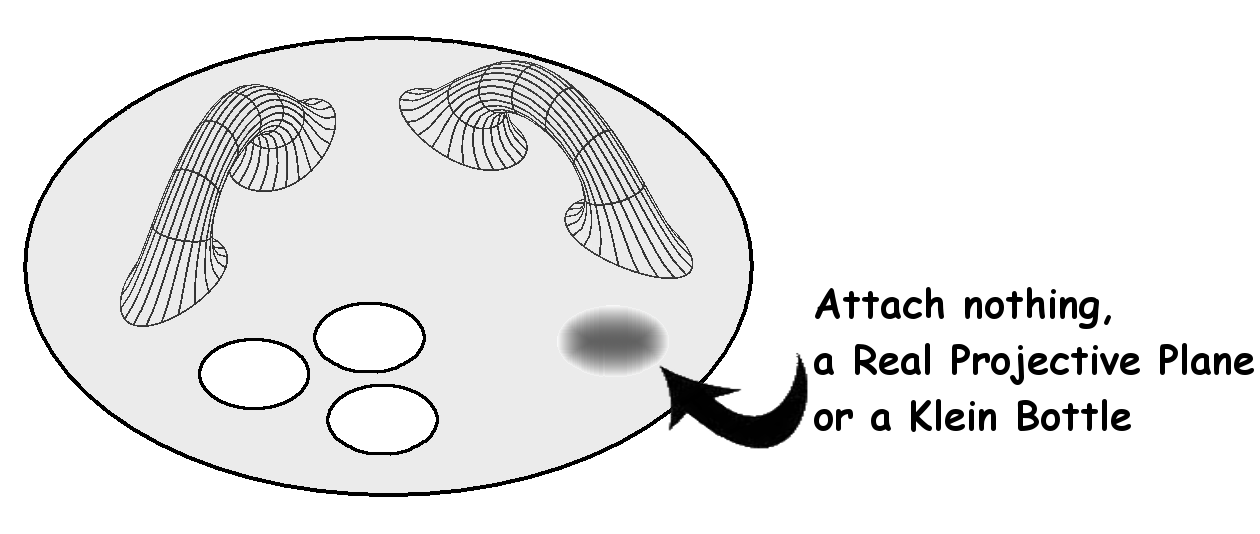}
\end{wrapfigure}

In the case (1), $h$ is the usual genus of the (orientable) surface $S$.\\
In the cases (2) and (3), $S$ is non-orientable and the rank of the fundamental group, minus the part due to the removed open discs, are $2h+1$ and $2h+2$ respectively.

This means that level surfaces are caracterized by their number of boundary components, their orientability and one more integral number. We could take the Euler caracteristic for that, but it will be a bit more efficient to use the following.

\begin{definition}
The \emph{demigenus} of a compact surface $S$ with boundary is the number\\
$g := 2-\chi(\hat{S})$ where $\chi$ is the Euler characteristic and $\hat{S}$ is the surface $S$ whose holes are filled up with discs.
\end{definition}


It is the rank of the fundamental group of the surface obtained by attaching discs to the boundary components of $S$. In the orientable case, $g$ equals the double of the usual genus. Note that if the demigenus is odd, then the surface is non-orientable. Also, if the demigenus is $0$, then the surface is orientable. We sum up that by saying the couple $(g, o)$ of the demigenus and the orientability of a surface belongs to the following set:\\
${\Lambda := (2\mathbb{N^*}\times\mathbb{Z}/\raisebox{-0.3ex}{$2\mathbb{Z}$})\cup((2\mathbb{N}+1)\times\{1\})\cup (0,0)}$\\
where $o=0$ stands for an orientable surface and $o=1$ stands for a non-orientable surface.

In order to have a proper collapse from the Reeb graph of $f$ to the Reeb graph of an extension, we need to keep track of the demigenus and the orientability of the level surfaces. We do so by labelling the edges of the collapse $dgm(h)$ with an element of $\Lambda$, representing the expected demigenus and orientability of the extension.

\begin{definition}
A \emph{labelled collapse} is a collapse to a diagram denoted by $dgm^{g,o}(h)$ whose edges are labelled by elements of $\Lambda$.

A \emph{(globally) allowable collapse} is a labelled collapse locally allowable and satisfying the local restrictions of the table \ref{TabFullLabelledCollapse} below.
\end{definition}

\begin{table}[H]
\centering
\caption{\label{TabFullLabelledCollapse} Labelled Local Collapses}
\includegraphics[width=380px]{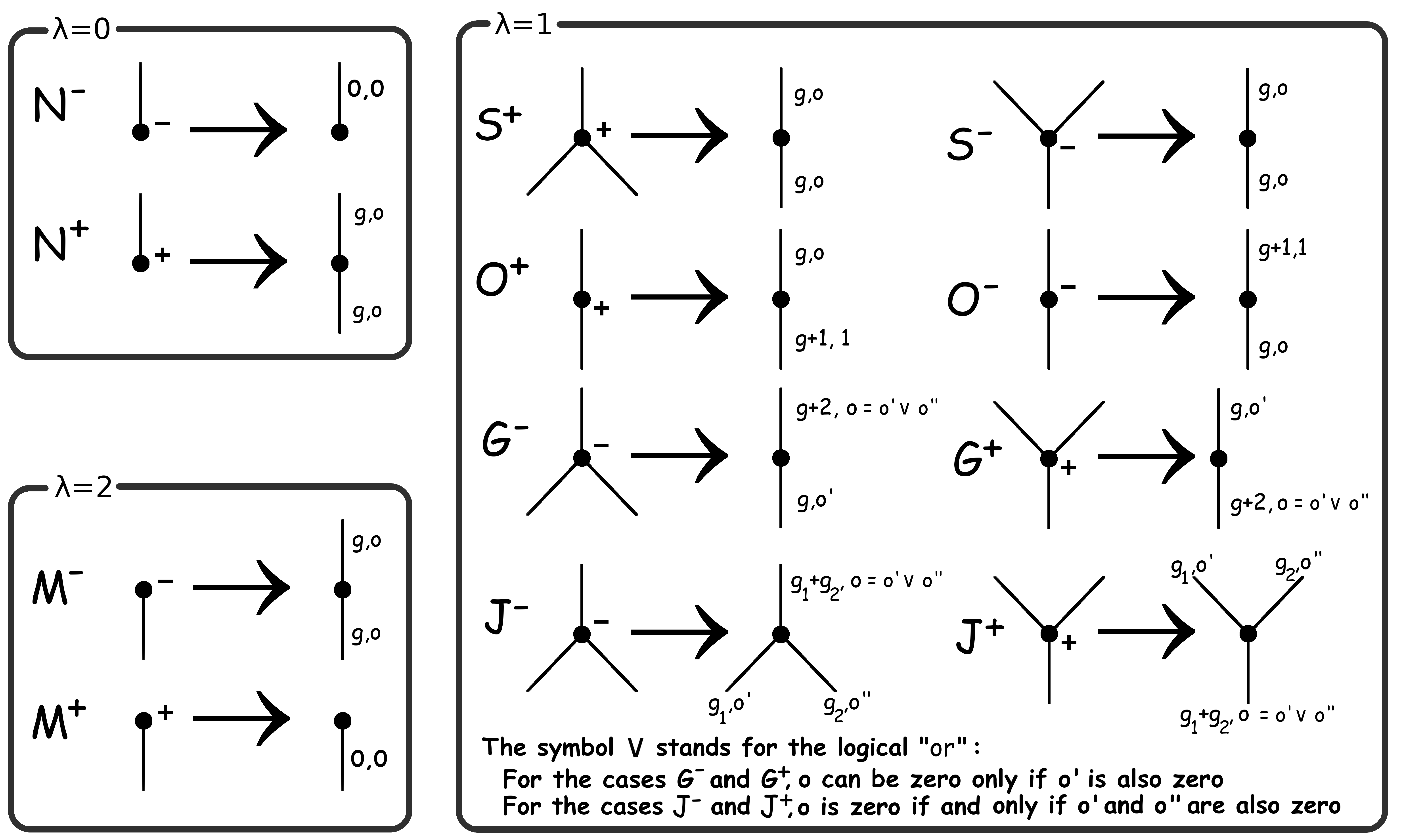}
\end{table}

Now we can state the announced results.

\begin{theorem}\refstepcounter{refcounter}\label{ThmAbstractVersion}
Let $f$ be the germ of a Morse function along a closed manifold $M^2$.
Then there exists a compact manifold $N^3$ bounded by $M^2$ and a non-singular Morse function $F$ extending $f$ to $N^3$ iff
there exists an allowable collapse from $dgm^+(f)$ to a labelled graph $dgm^{g,o}(h)$.
\end{theorem}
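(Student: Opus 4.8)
The plan is to prove the two implications separately. For necessity I extract an allowable collapse from the Reeb graph of a given extension; for sufficiency I build an extension slice by slice from the blueprint encoded by the collapse. The second direction is the substantial one.

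For necessity, suppose $F:N^3\to\mathbb{R}$ is a non-singular Morse function extending $f$, with $\partial N^3 = M^2$. First I would form its Reeb graph $dgm(F)$ and define the candidate collapse $C:dgm^+(f)\to dgm(F)$ by sending the class in $M^2$ of a point $x$ at level $c$ to its class in $N^3$ at the same level, i.e.\ to the connected component of $F^{-1}(c)$ containing $x$. This map manifestly commutes with the height function. For surjectivity I would argue that, since $F$ has no interior critical points, no component of a regular level set can be closed (a closed component would have to be born or die at an interior critical point, of which there are none), so every component meets $M^2$; bijectivity on vertices is then immediate, because $f$ is generic and the vertices of $dgm(F)$ are exactly the classes of the boundary critical points of $f|_{M^2}$, each at a distinct level. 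I would then label each edge of $dgm(F)$ by the pair $(g,o)\in\Lambda$ recording the demigenus and orientability of the corresponding level-surface component, which is well defined since the diffeomorphism type is constant along an edge. Local allowability is checked by inspecting the local model of $F$ at each boundary critical point: the Morse index $\lambda$ and the sign of $\langle\mathrm{d}f,\nu\rangle$ pin down the elementary cobordism occurring there, reproducing the entries of the vertex table. Finally, the labelled constraints of Table~\ref{TabFullLabelledCollapse}, together with the loop-resolution and orientability-consistency properties, hold because they merely transcribe genuine topological facts about the cobordisms realized inside $N^3$ (how boundary components, handles, and non-orientability propagate across each critical level).

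For sufficiency, I would reverse this construction. Cutting the target graph $dgm^{g,o}(h)$ at regular values between consecutive vertex heights decomposes it into edge-bands and elementary vertex-pieces. Over each regular level I build the prescribed level surface: by the surface classification Theorem~\ref{ThmSurfaceClass} a compact connected surface is determined up to diffeomorphism by its number of boundary components (read off from the boundary level curves of $f$), its demigenus $g$ and its orientability $o$, so each edge label in $\Lambda$ is realized by an honest surface. Over each vertex I insert the elementary cobordism dictated by the vertex type and the local labelled collapse, attaching or removing a disc, a handle, or a Möbius band, or merging or splitting components, equipped with a non-singular Morse function whose sole boundary critical point is the prescribed one; local allowability guarantees each such piece exists. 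I then glue consecutive pieces along their common level surfaces, the matching being possible precisely because adjacent labels agree in $(g,o)$ and in boundary count. The outcome is a compact $3$-manifold $N^3$ with $\partial N^3 = M^2$ carrying a non-singular $F$ restricting to $f$.

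I expect the main obstacle to lie in this sufficiency direction, and specifically in the consistent gluing of the non-orientable and handle modifications. Realizing an isolated elementary slice is routine, but ensuring that a handle (a non-trivial loop) created at one critical level can actually be cancelled at a later level inside a $3$-manifold is delicate: this is exactly where the failure of the Whitney trick bites, and it is the role of the loop-resolution property to encode precisely when such a cancellation is geometrically possible rather than merely homologically so. Likewise I would have to show that orientability consistency is exactly the condition preventing an incoherent assignment of orientability to the level surfaces across a slice. Proving that these two combinatorial conditions are not only necessary but sufficient for a globally coherent gluing, so that the locally built slices assemble into a genuine manifold carrying the required non-singular function, is the crux of the argument. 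Note that for this abstract version only the existence of \emph{some} bounding $N^3$ is required, so the finer control of $\pi_1(N^3)$ needed to pin down the solid Klein bottle (and hence Perelman's theorem) does not enter here.
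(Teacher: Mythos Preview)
Your overall architecture matches the paper's: necessity by reading off the labelled Reeb graph of $F$, sufficiency by assembling elementary cobordisms slice by slice from the top down. The necessity sketch is fine and essentially what the paper does.

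The misstep is in where you locate the difficulty of the sufficiency direction. You anticipate a global obstruction (``ensuring that a handle created at one critical level can actually be cancelled at a later level\ldots\ this is exactly where the failure of the Whitney trick bites''), and you invoke a separate ``loop-resolution property'' as the combinatorial witness that such cancellation is possible. For \emph{this} theorem there is no such obstruction and no such property. The definition of an allowable collapse is purely local: it is the list of labelled vertex rules in Table~\ref{TabFullLabelledCollapse}, nothing more. At a $G^-$ vertex the paper simply chooses a rectangle $R$ in the incoming level surface $L_a$ with $L_a\setminus R$ connected and with the required orientability behaviour; the label constraints $g\ge 2$, $o'\in\{0,1\}$ guarantee such an $R$ exists, and cutting along it drops the demigenus by $2$. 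No matching with a specific earlier $G^+$ is needed, no embedded-disc problem arises, and the Whitney trick is irrelevant. The construction is genuinely local and succeeds step by step; the only care required is the case analysis (twelve vertex types, with orientation/non-orientation conditions on the attaching maps for $S^+,O^+,J^+,G^+$ and rectangle choices for $S^-,G^-,O^-,J^-$), which the paper spells out explicitly with local models $MOD(\cdot)$.

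The concern you raise \emph{is} real, but it belongs to Theorem~\ref{ThmKleinVersion}, not here. There one must control $\pi_1$ of the resulting $3$-manifold, and that is where the paper introduces the auxiliary curves $\sigma_i,\sigma_i^*$ and forces later $G^-$ rectangles to cut specific earlier $\sigma_i$'s---your ``loop resolution''. Your closing remark that Perelman is not needed for the abstract version is correct; push that observation further and drop the anticipated global obstacle entirely.
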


\begin{theorem}\refstepcounter{refcounter}\label{ThmKleinVersion}
Let $f$ be the germ of a Morse function along $Kl^2$. Then $f$ can be extended to a non-singular Morse function $F$ to $\Kl$ iff
$dgm^+(f)$ has an allowable collapse $dgm^{g,o}(h)$ verifying either one of those conditions:
\begin{enumerate}[label={(\arabic*)}]
\item The label $o$ is 0 for all the edges and $\pi_1(dgm^{g,o}(h)) \simeq \mathbb{Z}$,
\item The set of points belonging to an edge labelled with $o=1$ is connected and non-empty, and $dgm^{g,o}(h)$ is simply connected.
\end{enumerate}
Moreover, one can choose the extension $F$ such that its Reeb graph is $dgm(h)$ and the demigenus and orientability of its level surfaces match the labels $g$ and $o$ respectively.
\end{theorem}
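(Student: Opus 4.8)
The plan is to bootstrap from Theorem~\ref{ThmAbstractVersion}: an allowable collapse already produces \emph{some} compact $N^3$ with $\partial N^3 = Kl^2$ carrying a non-singular extension $F$ whose Reeb graph and level-surface data are prescribed by the collapse. The entire content of Theorem~\ref{ThmKleinVersion} is therefore to single out, among these $N^3$, exactly the ones homeomorphic to $\Kl$, and the two listed conditions should be the combinatorial translation of this. Throughout I would keep in mind the two standard models of the solid Klein bottle: the twisted disc bundle over the circle $\Kl \cong D^{2}\mathbin{\widetilde{\times}}S^{1}$, and the product $\Kl \cong \mu\times I$ of a Möbius band $\mu$ with an interval. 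In the first, the generator of $\pi_1(\Kl)\cong\mathbb{Z}$ lives in the base of the Morse fibration and the level surfaces are orientable (discs); in the second it lives in the fibre and the level surfaces are non-orientable (Möbius bands). Conditions~(1) and~(2) are precisely these two regimes.

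For necessity, I would start from a given non-singular $F$ on $\Kl$, pass to its labelled Reeb graph $G:=dgm(h)$ (an allowable collapse, by Theorem~\ref{ThmAbstractVersion}), and examine the orientation character $w_1\colon \pi_1(\Kl)\to\mathbb{Z}/2\mathbb{Z}$. Since $\Kl$ is assembled from the elementary slices of $F$ over $G$, a van Kampen / graph-of-groups computation expresses $\pi_1(\Kl)$ in terms of the fundamental groups of the level surfaces (fibres) and of $G$ (base); the loop-resolution property of the collapse guarantees that the fibre loops created along the way are cleaned, so the surviving generators are the Möbius cores of non-orientable fibres and the loops of $G$. As $\pi_1(\Kl)\cong\mathbb{Z}$ there is room for exactly one generator, and it must be the one on which $w_1$ is non-trivial. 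Either it comes from the base, forcing every level surface to be orientable ($o=0$) and $\pi_1(G)\cong\mathbb{Z}$ (condition~(1)); or it comes from a fibre, so some level surface is non-orientable ($o=1$), and then the same ``only one $\mathbb{Z}$'' constraint forces $G$ to be a tree (hence $dgm^{g,o}(h)$ simply connected) and forces the $o=1$ locus to be connected, since two disjoint Möbius cores would contribute a second generator. The orientability-consistency property is what prevents this locus from fragmenting spuriously, yielding condition~(2).

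For sufficiency I would take the $N^3$ furnished by Theorem~\ref{ThmAbstractVersion} and prove $N^3\cong\Kl$. The identity $\partial N^3 = Kl^2$ is automatic, as $F$ extends the germ $f$ along $Kl^2$. Running the $\pi_1$ computation in reverse: under condition~(1) the loop-resolution property kills all fibre loops, so $\pi_1(N^3)\cong\pi_1(G)\cong\mathbb{Z}$; under condition~(2) the tree $G$ contributes nothing and the single connected non-orientable locus contributes exactly one generator, again giving $\pi_1(N^3)\cong\mathbb{Z}$, with the peripheral map $\pi_1(Kl^2)\to\pi_1(N^3)$ the one characteristic of the solid Klein bottle. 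I would then verify that $N^3$ is irreducible, again using loop resolution to exclude fake $2$-spheres and stray $S^2\times S^1$ or handle summands produced during the slice gluing.

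The decisive and hardest step is upgrading this homotopical data to an actual homeomorphism. Homotopy type does not determine a $3$-manifold (the lens-space remark of the introduction), so here I would invoke geometrization: a compact, irreducible $3$-manifold with boundary $Kl^2$ and $\pi_1\cong\mathbb{Z}$ realising the correct peripheral structure is homeomorphic to $\Kl$. Such an $N^3$ is Haken and aspherical ($\Kl\simeq S^1$), its core circle carries all of $\pi_1$, and Perelman's theorem together with the rigidity of the resulting Seifert-fibred structure pins it onto the standard twisted bundle. This is exactly the point the introduction flags as needing Perelman rather than elementary $3$-manifold topology, and it is where I expect the real work, and the appeal to deep external input, to concentrate. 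The final ``moreover'' clause is then free: the extension coming from Theorem~\ref{ThmAbstractVersion} has Reeb graph $dgm(h)$ and level surfaces whose demigenus and orientability match the labels $g$ and $o$ by construction.
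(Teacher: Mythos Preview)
Your overall architecture is right, and your recognition step (a compact $3$-manifold with boundary $Kl^2$ and $\pi_1\cong\mathbb{Z}$ must be $\Kl$) is exactly the lemma the paper isolates; the paper proves it more concretely via the loop theorem, Poincar\'e--Lefschetz duality, and then Perelman applied to the complementary ball, rather than invoking Haken/Seifert rigidity, but either route is fine.

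The real gap is in your sufficiency argument. You write ``take the $N^3$ furnished by Theorem~\ref{ThmAbstractVersion}'' and then claim that under condition~(1) ``the loop-resolution property kills all fibre loops, so $\pi_1(N^3)\cong\pi_1(G)\cong\mathbb{Z}$''. This does not follow. The allowable-collapse constraints of Table~\ref{TabFullLabelledCollapse} pin down the \emph{level surfaces} (their demigenus and orientability), not the ambient $3$-manifold: at a $G^-$ step there are many admissible rectangles $R$ in $L_a$ giving the same new level surface but \emph{different} $3$-manifolds, because the handle being cancelled need not be the one that was just created. A generic run of the Theorem~\ref{ThmAbstractVersion} construction can therefore produce connect-summands (extra $S^1\times S^2$ or twisted versions) and $\pi_1(N^3)$ can acquire superfluous free factors. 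The paper handles this by \emph{modifying} the construction: at each $o$-preserving $G^+$ step one records a dual pair of curves $\sigma_i,\sigma_i^*$ in the level surface, carries them downward through subsequent stages, and at each $o$-preserving $G^-$ step forces the rectangle $R$ to cut exactly one $\sigma_i$ and miss all the others. This bookkeeping, inherited from Curley's orientable argument, is what guarantees $\pi_1(M_1)\cong\mathbb{Z}$; your ``loop resolution'' invocation is standing in for it but does not do the work. A smaller point on the necessity side: when $\pi_1(dgm(F))$ is trivial, the given collapse need not itself satisfy condition~(2); the paper sometimes has to \emph{switch} the $o$-labels on matched $G^+/G^-$ pairs to make the $o=1$ locus connected, and your argument should allow for that adjustment.
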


\newpage

\emph{Examples: }
\begin{figure}[H]
\captionsetup{justification=centering}
\centering
\caption{\label{Example1} One of the simplest extendable germ of a Morse function from $Kl^2$ to $\Kl$}
\includegraphics[width=100px]{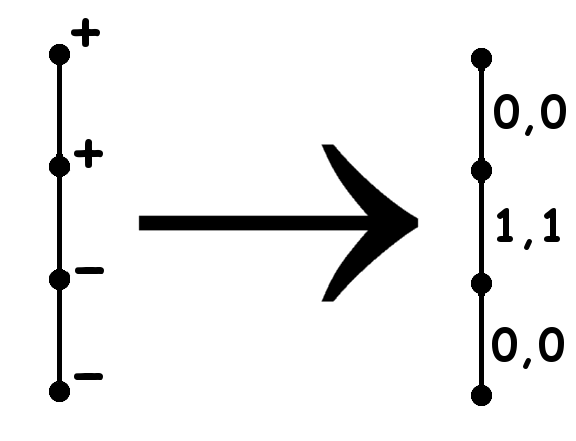}
\caption{\label{Example2} Another extendable germ of a Morse function from $Kl^2$ to $\Kl$\\Change the label of any one of the vertices and the germ is not extendable anymore}
\includegraphics[width=150px]{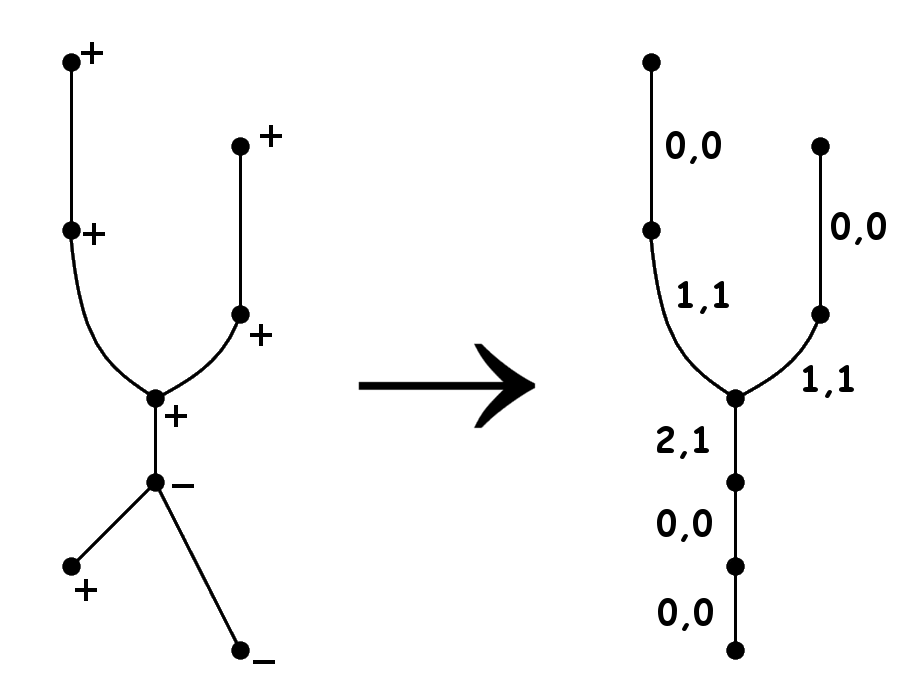}
\caption{\label{Example3} An extendable germ of a Morse function from $Kl^2$ to $\Kl$ with the level surfaces of an extension\\The thick lines are the level curves along the boundary}
\includegraphics[width=500px]{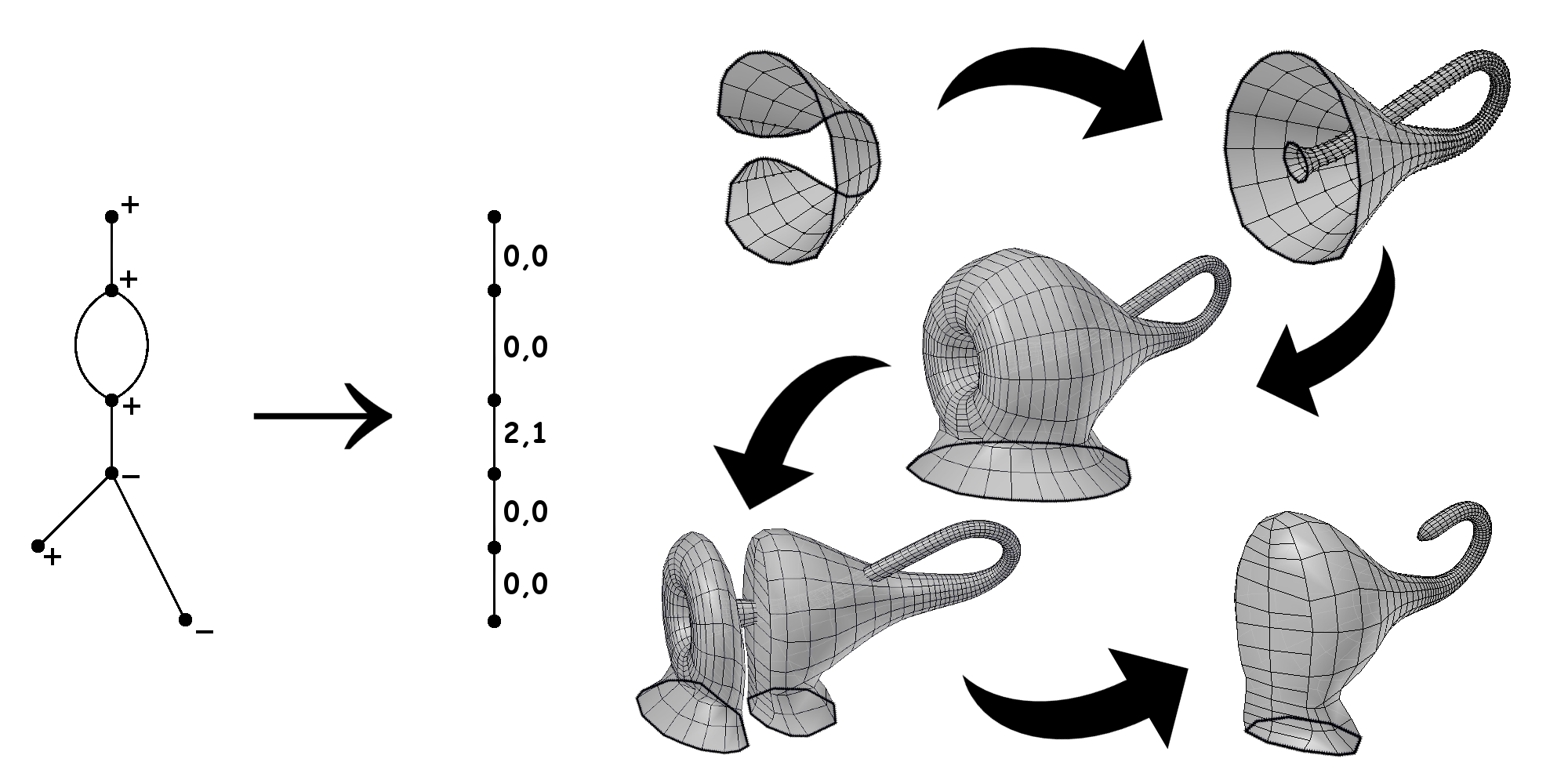}
\end{figure}

\newpage

\paragraphtitle{The Proofs}

\begin{proofthm}[\ref{ThmAbstractVersion}]

$\implies$ : Let $F$ be a non-singular extension of $f$.
It is easy to see that, thanks to the distinct critical values hypothesis, $F$ induces a locally allowable collapse on the Reeb graphs.

Notice that the connected components of the level surfaces close to points of type $M^+$ or $N^-$ (they are sometimes called \emph{true extremums}) are discs $D^2$. Passing through a critical value modifies the topology of this surface and all the modifications done on the level surfaces lead back to a disc. The fact that $F$ is non-singular guarantees that those level surface modifications only happen at critical value of $f_{|M^2}$. The following table shows the effects of the local restrictions of the collapse to the different useful indicators. We ignore the restrictions whose effect is just to split the level surfaces.

\begin{table}[H]
\centering
\caption{\label{LocalEffects} Effects of local restrictions on the topological invariants}
\changemargin{-1cm}{1cm}
\vspace{-0.5cm}
\begin{tabular}{|c|c|c|c|}
\hline
\multirow{2}{*}{Critical point} & number of connected components & Demigenus & Orientability\\
 & of the boundary & of the level surface & of the level surface\\
\hline
$S^+$ and $M^-$ & $+1$ & $0$ & \multirow{2}{*}{No change} \\
$S^-$ and $N^+$ & $-1$ & $0$ & \\
\hline
$O^+$ & $0$ & $+1$ & Leads to a non-orientable surface\\
$O^-$ & $0$ & $-1$ & Can only happen on a non-orientable surface\\
\hline
\multirow{2}{*}{$G^+$} & \multirow{2}{*}{$-1$} & \multirow{2}{*}{$+2$} & Either leads from orientable to non-orientable\\
& & & or does not change the orientability\\
\multirow{2}{*}{$G^-$} & \multirow{2}{*}{$+1$} & \multirow{2}{*}{$-2$} & Either leads from non-orientable to orientable\\
& & & or does not change the orientability\\
\hline
\end{tabular}
\endchangemargin
\changemargin{1cm}{-1cm}
\caption*{The changes are from the connected component of the level surface above the critical point to the one below the critical point.}
\endchangemargin
\end{table}

\vspace{-0.5cm}
Now, considering $dgm(F)$ labelled with both the demigenus and the orientability flag of its level surface components, the induced collapse is allowable. Indeed, since the level surface components are discs around the extrema, the cases $M^+$ and $N^-$ are labelled with $(0, 0)$ as required. The critical points of types $M^-$, $N^+$, $S$, $G$ and $O$ satisfy the related local change of labelling because of their effect depicted in table \ref{LocalEffects}. And the critical points of types $J$ do not change the demigenus of the whole level surfaces. In case $J^-$, a Möbius ribbon in the level surface component of one of the two components below the critical point can be moved upward to the one above. Reciprocally, one cannot realize a non-orientable surface as the boundary-connected-sum of two orientable ones. So the orientability satisfies the restrictions of the table \ref{TabFullLabelledCollapse} as well.

\vspace{0.75cm}
$\impliedby$ : As in Curley's article, we construct the extension and its domain step by step. Let $(v_1, ..., v_n)$ and $(w_1, ..., w_n)$ the vertices of $dgm^+(f)$ and $dgm^{g,o}(h)$ respectively, in the increasing order. Starting with $M_n = MOD(M^+)$ and attaching one of the local models below at each step, we construct a sequence of manifold $M_1, ..., M_n$ such that $M_i \subset M_{i-1}$ and maps $F_i$ that are non-singular functions on $M_i$ such that $F_{i\, | M_{i+1}} = F_{i+1}$ and $F_1$ is the extension of $f$.

\begin{figure}[H]
\centering
\caption{\label{Models} Local Models}
\includegraphics[width=260px]{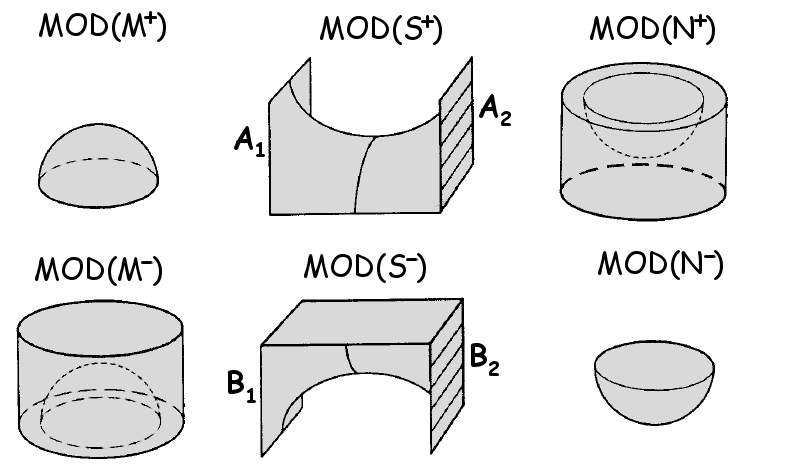}
\end{figure}

Given $M_{i+1}$ and $F_{i+1}$, we construct $M_i$ and $F_i$ by cases. We can speak of level surface components that are above $w_i$ (but not below) since that part of the extension is already constructed. We will note $L_{i+1}$ the whole level surface minimizing $F_{i+1}$. When we mention surfaces above $w_i$ (resp. curves above $v_i$), we implicitly consider connected components of $L_{i+1}$ (resp. $\partial L_{i+1}$). The way we define $F_i$ from $F_{i+1}$ will be clear from the construction of $M_i$.

If $v_i$ is of type $M^+$, let $M_i = M_{i+1} \cup_{g_1} (L_{i+1} \times I) \coprod MOD(M^+)$ where $I = [-1, 1]$ and $g_1$ maps $L_{i+1}$ to $L_{i+1} \times \{1\}$.

If $v_i$ is of type $M^-$, let $L_a$ be above $w_i$ and $D^2$ a disc in $L_a$.\\
We set ${M_i = M_{i+1} \cup_{g_1} (\overline{L_{i+1}\backslash D^2}) \times I \cup_{g_2} MOD(M^-)}$ where $g_2$ maps $\partial D^2 \times I$ to the cylindrical side of $MOD(M^-)$.

\begin{wrapfigure}{r}{140px}
\captionsetup{justification=centering}
\centering
\caption{\label{ProofSample1} The Case $S^+$}
\includegraphics[width=120px]{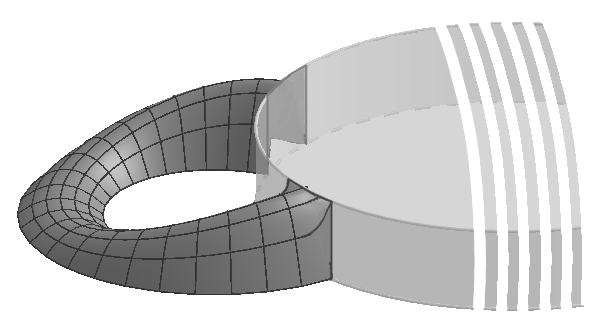}
\end{wrapfigure}

If $v_i$ is of type $S^+$, let $\partial_a$ be above $v_i$ (that is, a component of $\partial L_{i+1}$ above $v_i$) and $\alpha_1$, $\alpha_2$ be disjoint arcs in $\partial_a$.
We set ${M_i = M_{i+1} \cup_{g_1} (L_{i+1} \times I) \cup_{g_3} MOD(S^+)}$ where $g_3$ maps $(\alpha_1 \cup \alpha_2) \times I$ to the sides $A_1 \cup A_2$ of $MOD(S^+)$. The map $g_3$ must also verify an orientation compatibility condition: for all rectangles $R \subset L_{i+1}$ joining $\alpha_1$ and $\alpha_2$ and such that each side of $R$ is homotopic to some arc of $\partial_a$, we have that $(R \times I) \cup_{g_3} MOD(S^+)$ is orientable.

If $v_i$ is of type $O^+$, we do the same as for the case $S^+$ but with the orientation non-compatibility condition: for all rectangles $R \subset L_{i+1}$ like above, we have that $(R \times I) \cup_{g_3} MOD(S^+)$ is not orientable.

If $v_i$ is of type $J^+$, we do the same as for the case $S^+$ but with $\alpha_1$ and $\alpha_2$ being respectively arcs in $\partial_{a_1}$ and $\partial_{a_2}$, the two connected components above $v_i$. Also, the orientability condition becomes the following:
\begin{itemize}
    \item If $w_i$ is the lowest point of a simple loop of $dgm^{g,o}(h)$, then the surfaces $L_a$ and $L_b$ above $w_i$ are in the same connected component of $M_{i+1}$. We choose $g_3$ preserving or reversing the orientation of $M_i$ according to the orientability of the corresponding connected component of $f^{-1}(]f(v_i)-\epsilon,+\infty[)$.
    \item Else, $L_a$ and $L_b$ are not in the same connected components of $M_{i+1}$ and no orientation compatibility has to be fulfilled by $g_3$.
\end{itemize}

If $v_i$ is of type $G^+$, we do the same as for the case $J^+$ except that $\partial_{a_1}$ and $\partial_{a_2}$ are now in the same level surface component $L_a$. If $L_a$ is orientable, then we use the orientability condition or the non-orientability condition depending on whether $o=0$ or $o=1$. If $L_a$ is not orientable, then $o=1$ and whichever condition leads to the same result (as gluing a torus or a Klein bottle on a non-orientable surface has the same effect).

If $v_i$ is of type $S^-$, let $\partial_{a_1}$ and $\partial_{a_2}$ be above $v_i$ and $L_a$ be above $w_i$. Choose a rectangle $R$ in $L_a$ which joins $\partial_{a_1}$ and $\partial_{a_2}$ and call $\beta_1$ and $\beta_2$ the sides of $R$ that are not in $\partial_{a_1}$ or in $\partial_{a_2}$.\\
We set ${M_i = M_{i+1} \cup_{g_1} (\overline{L_{i+1}\backslash R}) \times I \cup_{g_4} MOD(S^-)}$ where $g_4$ maps $(\beta_1 \cup \beta_2) \times I$ to the sides $B_1$ and $B_2$ of $MOD(S^-)$.

\begin{wrapfigure}[11]{r}[34pt]{210px}
\captionsetup{justification=centering}
\centering
\caption{\label{ProofSample2} Possible Rectangles in Case $G^-$\\On the left, it leads to a non-orientable surface\\On the right, it leads to an orientable surface}
\includegraphics[width=200px]{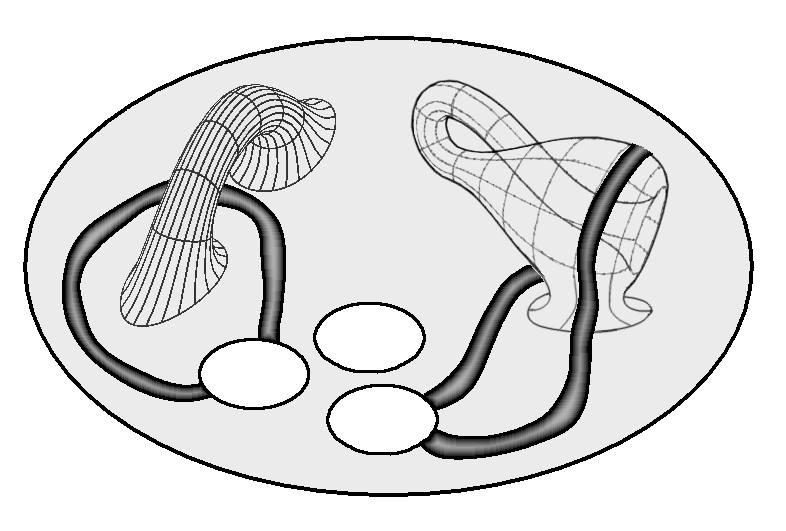}
\end{wrapfigure}

If $v_i$ is of type $G^-$, we do the same as for the case $S^-$ but with $R$ joining the curve above $v_i$ to itself. The rectangle $R$ must also verify two additional conditions:
\begin{itemize}
    \item $L_a \backslash R$ is connected,
    \item \begin{itemize}
         \item If $o'=0$, then we choose $R$ such that any Möbius strip (and thus orientation-reversing loops) of $L_a$ intersects each of the sides $\beta_1$ and $\beta_2$ of $R$,
         \item Else, we choose $R$ such that there exists a Möbius strip in $L_a$ disjoint from $R$.
         \end{itemize}
\end{itemize}

If $v_i$ is of type $O^-$, we do the same as for the case $G^-$ but with the 2$^{nd}$ additional condition becoming:
\begin{itemize}[label=--]
\item If $L_a$ is non-orientable and $o=0$, then $g+1$ is odd and we choose $R$ cutting all the Möbius strips of $L_a$,
\item Else, $g \ge 2$ and we choose $R$ such that $L_a \backslash R$ is non-orientable.
\end{itemize}

If $v_i$ is of type $J^-$, we do the same as for the case $G^-$ but we replace both additional conditions by these:
\begin{itemize}
\item $L_a \backslash R$ has two connected componants $L_x$ and $L_y$,
\item The demigenus and the orientability of $L_x$ match with the labels of one of the two edges below $w_i$ and so do $L_y$'s ones with the labels of the other edge.
\end{itemize}

If $v_i$ is of type $N^+$, let $\partial_a$ be above $v_i$ and $L_a$ be above $w_i$. Choose an annulus $A$ in $L_a$ such that $\partial_a$ is one of the boundary components of $A$.\\
We set ${M_i = M_{i+1} \cup_{g_1} (\overline{L_{i+1}\backslash A}) \times I \cup_{g_5} MOD(N^+)}$ where $g_5$ maps ${\overline{(L_{i+1}\backslash A)}\backslash (L_{i+1} \backslash A) \times I}$ to the cylinder side of $MOD(N^+)$.

If $v_i$ is of type $N^-$, let $L_a$ be above $w_i$.
Since $L_a$ is a disc, we can set \linebreak[4]{}
${M_i = M_{i+1} \cup_{g_1} (L_{i+1} \times I) \cup_{g_6} MOD(N^-)}$ where $g_6$ maps $L_a$ to the top of $MOD(N^-)$.

Proceeding all the way up to the minimal critical point, the constructed $M_1$ and $F_1$ are the non-singular extension of $f$ that we sought.
\end{proofthm}

\vspace{0.5cm}
Now the manifolds are $Kl^2$ bounding $\Kl$ and less Reeb graphs are possible than in the general situation, for both $dgm^+(f)$ and $dgm^{g,o}(F)$. Indeed, the number of Möbius points and the possible fundamental groups are limited for $dgm^+(f)$. Furthermore, the disposition of the non-orientable level surfaces and the possible fundamental groups are limited for the Reeb graph of any non-singular extension (or even for any non-singular Morse function) $F$ over $\Kl$. It turns out that all the situations for which a non-singular extension is possible are one of the followings:

\begin{table}[H]
\centering
\caption{\label{KleinPossiblePattern} Possible non-singular extension patterns}
\begin{tabular}{|r!{\vrule width0.005pt}c!{\vrule width0.005pt}l|}
\hline
$\pi_1(dgm^+(f)) \simeq \mathbb{Z}$ and &
\multirow{4}{*}{\begin{tikzpicture}
\draw[->,>=stealth] (0, 1) -- (1.5, 1);
\draw[->,>=stealth] (0, 0) -- (1.5, 0);
\draw[->,>=stealth] (0, 0.85) -- (1.5, 0.15);
\end{tikzpicture}}
& $\pi_1(dgm^{g,o}(F)) \simeq \mathbb{Z}$ and \\
$dgm^+(f)$ has no Möbius points &  & all the level surfaces are orientable\\
\Xcline{1-1}{0.01pt}\Xcline{3-3}{0.01pt}
$dgm^+(f)$ is simply connected &  & $dgm^{g,o}(F)$ is simply connected and the edges \\
and has two Möbius points &  & labelled with $o=1$ form a non-empty connected set\\
\hline
\end{tabular}
\end{table}

\noindent An example of the situation « $\pi_1(dgm^+(f)) \simeq \mathbb{Z} \rightarrow \pi_1(dgm^{g,o}(F)) \simeq \mathbb{Z}$ » is the figure \ref{CollapseSample}.\\
An example of the situation « $\pi_1(dgm^+(f)) \simeq \mathbb{Z} \rightarrow \pi_1(dgm^{g,o}(F)) \simeq \lbrace 0\rbrace$ » is the figure \ref{Example3}.\\
An example of the situation « $\pi_1(dgm^+(f)) \simeq \lbrace 0\rbrace \rightarrow \pi_1(dgm^{g,o}(F)) \simeq \lbrace 0\rbrace$ » is the figure \ref{Example1}.

We will not fully prove the left part of the table \ref{KleinPossiblePattern}, that is to say those are the only two possible situations for the Reeb graph of a germ of a Morse function along $Kl^2$. It is not needed for proving the Theorem \ref{ThmKleinVersion}. However, this result is interesting enough by itself so we give a partial proof of it.

For that purpose, we introduce the following definition.

\begin{definition}
Let $\mathcal{C}(Kl^2)$ and $\mathcal{C}(\Kl)$ be the sets of simple loops of $Kl^2$ and $\Kl$ respectively, then let:
\begin{itemize}
\item $\ell: (dgm(f) \backslash \{\text{vertices}\}) \rightarrow \mathcal{C}(Kl^2)$ mapping the points of $dgm(f)$ to the corresponding level curve connected components of $f$,
\item $E$ be an embedding of $Kl^2$ in $\Kl$ such that $E(Kl^2) = \partial\Kl$,
\item $\Pi: \mathcal{C}(\Kl) \rightarrow \pi_1(\Kl)$ mapping the simple loops onto one of the corresponding two classes in $\pi_1(\Kl)$.
\end{itemize}

We say that $f$ \emph{induces no loop} if $\Pi \circ E \circ \ell=0$. In short, we see the level curves of $f$ as loops in the solid Klein bottle. This definition does not depend on the chosen embedding because, given a presentation $\left\langle a,b \mid a b = b^{-1}a \right\rangle$ of $\pi_1(Kl^2)$, the generator $b$ is the one sent to $0 \in \pi_1(\Kl)$ for any choice of $E$. Any automorphism of $Kl^2$ keeps $\Ker(E_\#)$ invariant.

We say that $f$ \emph{induces one loop} if the set $\mathscr{G} = \overline{(\Pi \circ E \circ \ell)^{-1}(\mathbb{Z}^*)}$, where the closure is taken in $dgm(f)$, of level curves that are not contractible in $\Kl$ is a connected and simply connected subset of $dgm(f)$.
\end{definition}

Note that the properties of $f$ inducing one loop or inducing no loop cannot a priori be read on the Reeb graph of $f$ only. The following Proposition is a result very specific to the Klein bottle.

\begin{proposition}
$f$ either induces one loop or induces no loop.\\
Moreover, \guillemotleft $f$ induces one loop $\Leftrightarrow f$ admits at least one Möbius point $\Leftrightarrow f$ admits exactly two Möbius points \guillemotright.
\end{proposition}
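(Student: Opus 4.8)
The plan is to reduce the entire statement to the combinatorics of a single distinguished isotopy class of level curves. Since every regular level set is cut out by a non-vanishing gradient, each of its components is two-sided; and the classification of simple closed curves on $Kl^2$ (most cleanly obtained by lifting to the orientation double cover $T^2$) shows that an essential two-sided simple closed curve is isotopic either to the meridian or to the \emph{waist} that splits $Kl^2$ into its two Möbius bands, with respective classes $\bar b$ and $2\bar a$ in $H_1(Kl^2)=\mathbb Z\bar a\oplus(\mathbb Z/2)\bar b$. As $E_\#$ kills $\bar b$ and sends $\bar a$ to a generator of $\pi_1(\Kl)=\mathbb Z$, the composite $\Pi\circ E\circ\ell$ is just the $\bar a$-coordinate (up to sign): it vanishes on contractible curves and on meridians and equals $\pm2$ on waists. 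Hence a level curve is non-contractible in $\Kl$ \emph{iff} it is a waist, and $\mathscr G$ is exactly the closure in $dgm(f)$ of the set of edges carrying waist curves. I would then identify the Möbius points: a neighbourhood of the figure-eight singular level of a saddle has $\chi=-1$ with one boundary circle per adjacent regular level curve, so a single curve can pass to a single curve only if this neighbourhood is non-orientable (a holed Möbius band); these $1\!\to\!1$ saddles are the Möbius points, and an embeddedness/realizability check shows each changes the class of the passing curve by $\pm2\bar a$, i.e. toggles a waist with a class-$0$ curve.

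Next I would prove that $\mathscr G$ is a compact $1$-manifold whose boundary is exactly the set of Möbius points. At a vertex the number of incident waist-edges is at most two: a split or merge would force a homological identity such as $2\bar a=c_1+c_2$ or $0=2\bar a+2\bar a$ among the admissible classes $\{0,\bar b,\pm2\bar a\}$, which rules out three incident waist-edges and also shows that a split/merge vertex can never carry exactly one. Thus interior points of $\mathscr G$ have degree two, and its degree-one points are precisely the $1\!\to\!1$ (Möbius) saddles. To exclude circle components I would use a mod-$2$ intersection argument: a loop of waist-edges in $dgm(f)$ is traced by a closed curve $\Gamma\subset Kl^2$ transverse to the level curves and meeting a waist fibre $\gamma_0$ exactly once, giving $[\Gamma]\cdot[\gamma_0]=1$ in $\mathbb Z/2$, contradicting $[\gamma_0]=2\bar a\equiv 0$. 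Consequently $\mathscr G$ is a finite disjoint union of arcs and the number of Möbius points is even.

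To pin that number down to $0$ or $2$ I would track the total demigenus $D(K_t)=\sum_C(2-\chi(\hat C))$ of the sublevel sets $K_t=\{f\le t\}\subset Kl^2$. A direct handle computation gives, for each upward saddle crossing, $\Delta D=0$ when a band joins two boundary circles (a join or an orientable split), $\Delta D=+2$ when it joins two distinct boundary circles of one component (a genus/Klein handle), and $\Delta D=+1$ exactly at a non-orientable $1\!\to\!1$ band, i.e. at a Möbius point; extrema contribute $0$. Thus $D$ is monotone, and since it runs from $0$ to $D(Kl^2)=2$ we obtain $\#\{\text{Möbius points}\}+2\,\#\{\text{genus saddles}\}=2$, so $\#\{\text{Möbius points}\}=2-2\#\{\text{genus saddles}\}$ is even, non-negative and at most $2$: it is $0$ or $2$.

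Combining the pieces settles everything at once. If there are no Möbius points then $\mathscr G$ has no boundary and no circle components, hence $\mathscr G=\emptyset$ and $f$ induces no loop; conversely a Möbius point creates a waist, so $\mathscr G\neq\emptyset$, and then $\mathscr G$ is a union of arcs with $2\,\#\{\text{arcs}\}=\#\{\text{Möbius points}\}=2$, i.e. a single arc, which is connected and simply connected, so $f$ induces one loop. This yields simultaneously the dichotomy and the chain ``induces one loop $\Leftrightarrow$ at least one Möbius point $\Leftrightarrow$ exactly two Möbius points''. The hard part will be the local bookkeeping behind the first paragraph: matching the intrinsic $1\!\to\!1$ non-orientable saddles with the paper's $O^{\pm}$ vertices and verifying both the class-toggling and the $\Delta D=+1$ contribution, together with making the no-circle intersection argument fully rigorous when the waist family is threaded by the remaining (meridian and contractible) level curves.
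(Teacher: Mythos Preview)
Your argument is correct and follows a genuinely different route from the paper's.

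The paper bounds the number of M\"obius points by a direct geometric observation: the singular level at a M\"obius point is a figure-eight separating $Kl^2$ into an orientable and a non-orientable piece, and at most two such disjoint figure-eights can sit in the Klein bottle. It then establishes the structure of $\mathscr G$ by \emph{walking}: starting at a point of $\mathscr G$ (or at a M\"obius point) and following waist-edges, using the additive relation among the three homotopy classes at each trivalent saddle to show that exactly zero or two incident branches lie in $\mathscr G$, so the walk never stalls until it reaches a M\"obius endpoint; the ``no-circle'' step is disposed of with the remark that otherwise distinct level curves would intersect.

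Your approach replaces both pieces by more global bookkeeping. The demigenus count $D(K_t)=2|\pi_0|-\chi(K_t)-b(K_t)$ is monotone with jumps in $\{0,1,2\}$ and jump $1$ exactly at a M\"obius saddle; since $D$ runs from $0$ to $2$ on $Kl^2$ this immediately gives $\#\{\text{M\"obius}\}\in\{0,2\}$ without appealing to the disjoint-curve bound. Framing $\mathscr G$ as a compact $1$-manifold whose boundary is the set of M\"obius points packages the paper's path-following into one sentence. Your no-circle step is also sound: the waist $\gamma_0$ has $[\gamma_0]=2\bar a=0$ in $H_1(Kl^2;\mathbb Z/2)$ precisely because it \emph{separates} $Kl^2$, so the preimage of a putative circle $C\subset\mathscr G$ minus the fibre $\gamma_0$ would be connected (preimage of the arc $C\setminus\{p_0\}$) yet sit inside the disconnected $Kl^2\setminus\gamma_0$ --- this is the content of your intersection argument and makes the construction of $\Gamma$ unnecessary.

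What each buys: the paper's argument is elementary and visual, tailored to $Kl^2$. Your Euler-characteristic count is cleaner, gives evenness and the bound simultaneously, and would adapt to other closed surfaces (the same computation gives $\#\{\text{M\"obius}\}\equiv g\pmod 2$ and $\#\{\text{M\"obius}\}\le g$ for demigenus $g$). The one place to tighten is the toggling claim at a M\"obius vertex: you should note that the punctured M\"obius band sitting between the two adjacent level circles forces one of them to bound an orientable piece (hence class $0$) and the other to be the M\"obius boundary (hence a waist), ruling out ``waist on both sides'' --- the paper uses this implicitly as well.
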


\begin{wrapfigure}[10]{r}{160px}
\captionsetup{justification=centering}
\centering
\caption{\label{MobiusCurve} Level Curves around a Möbius Point}
\includegraphics[width=120px]{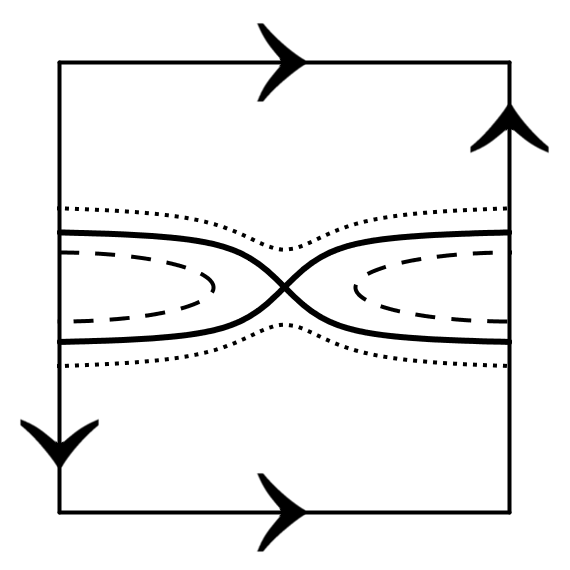}
\end{wrapfigure}

\hspace{0pt}
\begin{proof}
Let us first note that, since the boundary is a Klein bottle, there are at most two Möbius points.

Indeed, the level curve component of $f$ at a Möbius point is a connected curve $\mathscr{C}$ in $Kl^2$ satisfying:
\begin{itemize}
\item $Kl^2 \backslash \mathscr{C} = A \coprod B$ with $A$ and $B$ connected, $A$ orientable and $B$ non-orientable,
\item $\mathscr{C}$ is a regular curve except at the critical point which is a double point.
\end{itemize}
Since there can be at most two mutually disjoint curves of this type in a Klein bottle, there are at most two Möbius points for $f$.

\guillemotleft $f$ induces one loop $\Rightarrow f$ admits at least one Möbius point \guillemotright :
suppose that $f$ induces one loop. Then we construct an injective path on $dgm(f)$ starting at $\gamma \in \mathscr{G}$ and going to whichever direction. At (non-Möbius) saddle points, we choose the direction staying in $\mathscr{G}$. We can do that because:
\begin{itemize}
\item level curves in $\mathscr{G}$ are not contractible in $Kl^2$ and so we never reach critical points of types $N$ or $M$,
\item simple loops on the boundary of $\Kl$ are of one of the homotopy types $0$, $\pm 1$ and $\pm 2$ and so $\Image(\Pi \circ E \circ \ell) \subset \left\lbrace 0, \pm 1, \pm 2 \right\rbrace$,
\item if a level curve is of type $0 \in \pi_1(\Kl)$ above (resp. below) a Möbius point, then the level curves below (resp. above) that point are of types $2$ or $-2$,
\item at non-Möbius saddles points, the homotopy class of a level curve in the branche that is single (above a critical point of case $S^+$ or below a critical point of case $S^-$ for example) is either the sum or the difference of the classes of level curves in the two other branches.
\end{itemize}
And so $\Image(\Pi \circ E \circ \ell) \subset \left\lbrace 0, \pm 2 \right\rbrace$ and, at non-Möbius saddles points, either exactly $0$ or exactly $2$ branches correspond to points in $\mathscr{G}$. The constructed path eventually leads to a Möbius point since $dgm(f)$ is compact. It does not loop to its starting point $\gamma$ - otherwise there would be several level curves intersecting each other - hence it is simply connected.

\guillemotleft $f$ admits at least one Möbius point $\Rightarrow f$ admits two Möbius points $\Rightarrow f$ induces a loop\guillemotright :
suppose that $f$ admits a Möbius point $p$, then we also construct a path, this time starting at $p$ and going to the direction included in $\mathscr{G}$. By the same argument, this path leads to another Möbius point. This path contains all the points of $\mathscr{G}$ since it is a connected component of $\mathscr{G}$ and another connected component would imply the existence of two other Möbius points.

The equivalence \guillemotleft $f$ induces no loop $\Leftrightarrow f$ has no Möbius point\guillemotright\, is trivial from what precedes. It implies that $f$ either induces one loop or induces no loop.
\end{proof}

\begin{wrapfigure}[6]{r}{140px}
\vspace{-0.6cm}
\captionsetup{justification=centering}
\centering
\caption{\label{DiffPath} Allowable Collapse\\The dotted line is the set $\mathscr{G}$}
\includegraphics[width=120px]{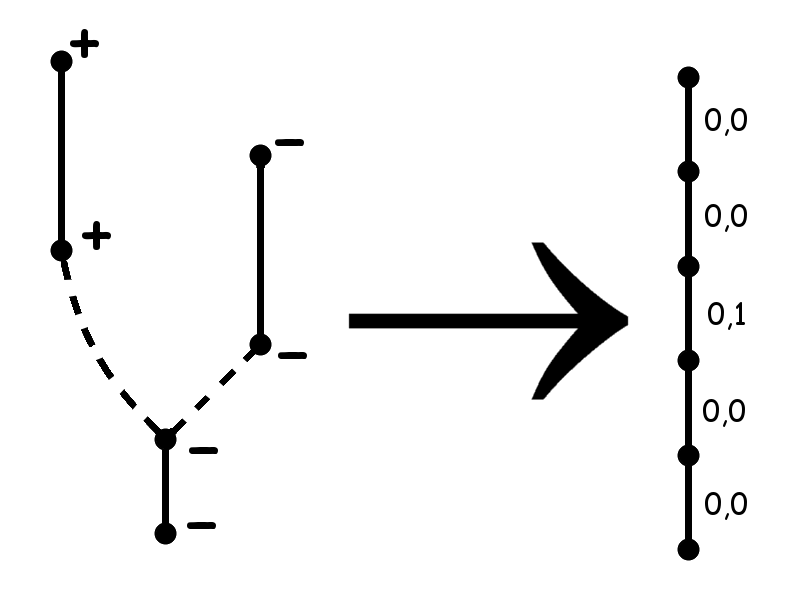}
\end{wrapfigure}

\textbf {Remark:}
When there is a non-singular extension $F$, the path $\mathscr{G}$ of this proposition does not always consist of the level curves of $f$ bounding non-orientable level surfaces of $F$. There are counter-examples of two kinds for that: the figure \ref{DiffPath} is a counter-example, and the crossway situation from table \ref{KleinPossiblePattern} gives other counter-examples.

\vspace{1.75cm}
\begin{proofthm}[\ref{ThmKleinVersion}]

$\implies$ : Let $F$ be a non-singular extension of $f$ to $\Kl$. We consider $dgm^{g,o}(F)$, the Reeb graph of $F$ labelled with both the demigenus and the orientability flag of its level surface components. We have the following commutative diagram:

\begin{center}
\begin{tikzcd}
Kl^2 \arrow[r, "E"] \arrow[d, "R_f"]
& \Kl \arrow[d, "R_F"] \\
dgm^+(f) \arrow[r, "C"]
& dgm^{g,o}(F)
\end{tikzcd}
\end{center}

$dgm^{g,o}(F)$ being a retract of $\Kl$, we have $\pi_1(dgm^{g,o}(F)) \in \lbrace \lbrace 0\rbrace, \mathbb{Z} \rbrace$.

Suppose that $\pi_1(dgm^{g,o}(F)) \simeq \mathbb{Z}$. Then $R_{F\,\#}$ is an isomorphism from $\pi_1(\Kl)$ onto $\pi_1(dgm^{g,o}(F))$. In particular, no non-trivial loop of $\pi_1(\Kl)$ is contained in a level surface component of $F$. Thus the label $o$ is $0$ for all the edges of $dgm^{g,o}(F)$.

Now suppose that $\pi_1(dgm^{g,o}(F)) \simeq \lbrace 0\rbrace$. We will distinguish three cases:
\begin{enumerate}
\item If there exists a surface level $S$ of odd demigenus $g$, then this level surface is non-orientable.
\item If the level surfaces all have an even demigenus and some of them have a non zero demigenus, then there are as many critical points of type $G^+$ as points of type $G^-$. Switching the value of the label $o$ between couples of them if necessary, we can find an allowable collapse satisfying the sought condition ($o=1$ for a non-empty connected component of the diagram).
\item The level surfaces may not all have a zero demigenus. Indeed, in such case, we can shrink the level surfaces: there is a space $R$ and a deformation retraction $\Kl \rightarrow R$ that sends each level surface component $S$ of $F$ onto the cell complex $e_0 \cup_{i=1}^{b-1} S^1$ where $b$ is the number of boundary components of $S$ and $e_0$ is a point (actually, since $\pi_2(\Kl) \simeq \lbrace 0\rbrace$, we can shrink $R$ further to $dgm^{g,o}(F)$). Every loop from such a cell complex can be moved up or down to a point and so we have ${\mathbb{Z} \simeq \pi_1(\Kl) \simeq \pi_1(R) \simeq \pi_1(dgm^{g,o}(F)) \simeq \lbrace 0\rbrace}$.
\end{enumerate}

In both of the first two cases, we found an collapse verifying $o=1$ for a non-empty connected component of $dgm^{g,o}(F)$.

The fact that the collapse must be allowable follows from the Theorem \ref{ThmAbstractVersion}.

\vspace{0.75cm}
$\impliedby$ : The construction of the extension is the same as for Theorem \ref{ThmAbstractVersion}. However, as for the orientable case\cite{Curley}, we must be more selective in choosing the rectangles at the $G^-$ stages, specifically those for which the label $o$ is preserved (we say that a critical point of type $G$ \emph{preserves} the label $o$ if $o = o'$ in the local labelled collapse of that critical point).

There can be at most two points of type $G$ switching the label $o$. For instance, there is exactly one point of type $G$ switching the label $o$ if and only if there are two Möbius points of the same type. In any case, because of the balance between the increases and the decreases of the label $g$, we are left with as many points of type $G^+$ preserving the label $o$ as points of type $G^-$ preserving the label $o$.

In the following, the notations are the ones used in the proof of Theorem \ref{ThmAbstractVersion}.

At the first stage corresponding to points $v_i$ of type $G^+$ preserving the label $o$, we choose two simple closed curves $\sigma_i$ and $\sigma^*_i$ in the level surface $L_i$. We choose the pair so that they intersect transversely in one point and so that $\sigma_i$ can be displaced upward to a boundary component of a level surface. We then displace those curves downward when constructing the following stages $M_j$. At other stages of type $G^+$ preserving the label $o$, we choose two other curves $\sigma_j$ and $\sigma^*_j$ disjoint from the displaced curves and satisfying the same conditions. At stages corresponding to a point $v_j$ of type $G^-$ preserving the label $o$, the rectange $R$ used for the construction of $M_j$ must also cut one $\sigma_i$ along a small arc, and none of the other chosen curves.

Proceeding this way, we avoid creating unnessecary non-trivial loops and so $\pi_1(M_1) \simeq \mathbb{Z}$. The following Lemma concludes the proof of the Theorem.

\begin{lemma}
Let $V^3$ be a compact connected 3-manifold with $\partial V^3=Kl^2$ and $\pi_1(V^3) \simeq \mathbb{Z}$. Then $V^3$ is the solid Klein bottle.
\end{lemma}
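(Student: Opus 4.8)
The plan is to find a compressing disc for the boundary, cut $V^3$ along it to trivialise the fundamental group, then invoke the Poincaré conjecture to recognise the resulting piece as a ball, and finally reassemble $V^3$ as a twisted $D^2$-bundle over the circle.

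First I would observe that $V^3$ is non-orientable, since its boundary $Kl^2$ is. Because $\pi_1(Kl^2)$ is infinite and non-abelian it cannot inject into $\pi_1(V^3)\simeq\mathbb Z$, so the kernel of $i_*\colon\pi_1(Kl^2)\to\pi_1(V^3)$ is non-trivial; the Loop Theorem then yields a properly embedded compressing disc $(D,\partial D)\subset(V^3,Kl^2)$ with $\partial D$ an essential simple closed curve. A small but useful point is that $\partial D$ is automatically two-sided in $Kl^2$: the normal bundle of a disc is trivial, so $D$ has a bicollar, which meets $Kl^2$ in an annular neighbourhood of $\partial D$. Hence $\partial D$ is one of the two types of essential two-sided curve on the Klein bottle — either non-separating, or the separating curve bounding two Möbius bands.

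Next I would cut $V^3$ along $D$ to obtain $V'$. Since reversing the cut identifies the two disc scars $D_\pm\subset\partial V'$ by a homeomorphism and a disc is simply connected, van Kampen (an HNN extension over the trivial group) gives $\pi_1(V^3)\simeq\pi_1(V')*\mathbb Z$; as $\pi_1(V^3)\simeq\mathbb Z$, Grushko's theorem forces $\pi_1(V')\simeq\{1\}$. This is where the separating case is eliminated: a simply connected $V'$ is orientable, so $\partial V'$ must be orientable, whereas surgering $Kl^2$ along the separating curve produces $\mathbb{RP}^2\sqcup\mathbb{RP}^2$ (and if instead $D$ were to separate $V^3$, one piece would be bounded by a single $\mathbb{RP}^2$, impossible since its Euler characteristic is odd). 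Therefore $\partial D$ is non-separating; moreover $D$ is non-separating in $V^3$, as otherwise some component of $V'$ would have a single disc as its entire boundary. Surgering $Kl^2$ along $\partial D$ then caps the resulting annulus into a sphere, so $\partial V'\cong S^2$.

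Finally I would cap $\partial V'\cong S^2$ with a $3$-ball to obtain a closed, simply connected, orientable $3$-manifold, which by Perelman's solution of the Poincaré conjecture is $S^3$; hence $V'\cong B^3$. Then $V^3$ is recovered from $B^3$ by identifying two disjoint discs in its boundary sphere, i.e.\ $V^3$ is a $D^2$-bundle over $S^1$; being non-orientable, the gluing is orientation-reversing and $V^3$ is the solid Klein bottle $\Kl$. The main obstacle is exactly this final identification: reducing to a closed simply connected $3$-manifold and pinning it down as $S^3$ is precisely the step where homotopy data is upgraded to homeomorphism type, and it is unavailable without Perelman's theorem, while the separating-curve analysis via orientability is the one genuinely new wrinkle caused by non-orientability. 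An alternative organisation, perhaps cleaner for the compression step, is to pass first to the orientation double cover $\tilde V$ (orientable, with torus boundary and $\pi_1\simeq\mathbb Z$), prove $\tilde V$ is a solid torus by the same cut-and-cap argument, and then recover $V^3$ as the quotient by the free orientation-reversing deck involution — at the cost of having to classify that involution, which is why I would favour the direct route above.
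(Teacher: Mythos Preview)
Your proof is correct and follows the same overall architecture as the paper's: use the Loop Theorem to produce a compressing disc, cut along it to obtain a simply connected piece with $S^2$ boundary, invoke Perelman to recognise that piece as $B^3$, and reassemble $V^3$ as the non-orientable genus-one handlebody.

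The one place where your argument genuinely diverges from the paper is the middle step --- establishing that the cut piece is simply connected with spherical boundary. You do this by a direct case analysis (two-sidedness of $\partial D$, separating versus non-separating in $Kl^2$ and in $V^3$), together with van Kampen/HNN and Grushko to force $\pi_1(V')\simeq\{1\}$, then use that a simply connected $3$-manifold is orientable to exclude the case $\partial V'\supset\mathbb{RP}^2$. The paper instead argues homologically: it shows $[D]\neq 0$ in $H_2(V^3,Kl^2;\mathbb{Z}/2\mathbb{Z})$ (the separating-curve case being handled by the same ``$\mathbb{RP}^2$ cannot bound'' observation you use), and then invokes Poincar\'e--Lefschetz duality to see that $D$ meets a generator of $\pi_1(V^3)$ an odd number of times, whence cutting kills $\pi_1$. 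Your route is more elementary in that it avoids duality entirely; the paper's route is a little slicker once one is willing to use the intersection form. Both arrive at the same endpoint and both rely on Perelman for the final identification, exactly as you flag.

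One small wording issue: when you dispose of the case ``$\partial D$ non-separating, $D$ separating'' by saying a component of $V'$ would have ``a single disc as its entire boundary'', the cleaner statement is that the boundary of that component would fail to be closed (or, equivalently, that $\partial V'$ is the connected sphere obtained by capping the annulus $Kl^2\setminus N(\partial D)$ with $D_\pm$, so $V'$ cannot split). The conclusion is of course the same.
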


\begin{proof}
Let us consider $i: \pi_1(Kl^2) \rightarrow \pi_1(V^3)$. Since $\pi_1(V^3)$ is abelian and $\pi_1(Kl^2)$ is not, we have $\Ker(i) \ne 0$. By the loop Theorem\cite{Hempel}, there is a disk $D^2 \subset V^3$ such that $\partial D^2 \subset Kl^2$ and the simple loop $[\partial D^2]$ is not trivial in $Kl^2$.

We now prove that $[D^2] \ne 0$ in the relative homology group $H_2 (V^3, Kl^2 ; \mathbb{Z}/\raisebox{-0.3ex}{$2\mathbb{Z}$})$. Indeed, if $[\partial D^2]=0$ in $H_1(Kl^2 ; \mathbb{Z}/\raisebox{-0.3ex}{$2\mathbb{Z}$})$, then $Kl^2 \backslash \partial D^2$ has two connected components and none of them is a disc. Hence $Kl^2 \backslash \partial D^2$ are two Möbius strips. Then $[D^2] \ne 0$ in $H_2 (V^3, Kl^2 ; \mathbb{Z}/\raisebox{-0.3ex}{$2\mathbb{Z}$})$ since, otherwise, we would have \textit{Mö}$\cup D^2 \simeq \mathbb{P}^2(\mathbb{R})$ be the boundary of a connected component of $V^3 \backslash D^2$ and we already saw that it is not possible.

By the Poincaré-Lefschetz duality Theorem\cite{Lefschetz}, the intersection form $H_2 (V^3, Kl^2 ; \mathbb{Z}/\raisebox{-0.3ex}{$2\mathbb{Z}$}) \times H_1 (V^3 ; \mathbb{Z}/\raisebox{-0.3ex}{$2\mathbb{Z}$}) \rightarrow \mathbb{Z}/\raisebox{-0.3ex}{$2\mathbb{Z}$}$ is non-degenerate. In other words, if $S^1$ represents a generator of $\pi_1 (V^3) \simeq H_1 (V^3 ; \mathbb{Z})$, we have that $\# (D^2 \cap S^1)$ is odd, thus non-zero.

Consequently, $V^3 \backslash D^2$ is simply connected and $\partial (\overline{V^3 \backslash D^2})$ is the 2-sphere.
By the Poincaré-Perelman Theorem\cite{Perelman}, $\overline{V^3 \backslash D^2}$ is a 3-ball so that $V^3$ is realized as the non-orientable 3-ball with one handle, that is $\Kl$.

\end{proof}
\end{proofthm}

\paragraphtitle{Future Work}

The results of this article are a generalization of the problem of finding non-singular Morse functions extending a germ along a non-orientable 2-dimensional boundary. It also steps in the problem of controlling the domain of the extension when we target a non-contractible manifold.

We think that the method of the Theorem \ref{ThmKleinVersion} can be adapted at least to the torus, with an extraneous subtility. Indeed, the problems \guillemotleft Given a germ along the torus, find an non-singular extension to a solid torus\guillemotright\space and \guillemotleft Given a germ along $T^2$ and an embedding of $T^2$ as the boundary of $T_s^3$, find a non-singular extension compatible with the embedding\guillemotright\space are not equivalent anymore. This is because there are essentially two different embeddings of $T^2$ as the boundary of $T_s^3$, depending on whether we want to see $T_s^3$ as $S^1 \times D^2$ or $D^2 \times S^1$. In the case of the Klein bottle, there is no such ambiguity and we have a way to distinguish the loops that become trivial in the domain of the extension. The Reeb graph will likely not be enough in order to answer the second problem.

Another generalization is obviously to increment the dimension. Our approach suggests that the topology of the level hypersurfaces should be tracked in order to know which kind of transformations we are allowed to perform on them at each step. The diversity of the unlabelled local collapse configurations fastly becomes quite poor relatively to the dimension (in particular if we only consider generic Morse functions as we did here). For instance, the unlabelled local collapses one dimension higher is given in the table \ref{TabCollapseDim4}.

\begin{table}
\centering
\captionsetup{justification=centering}
\caption{\label{TabCollapseDim4} Possible Local Collapses of the Reeb graph of a germ along a $4-$Manifold\\They are all possible even if we restrict ourselves to orientable manifolds}
\includegraphics[width=400px]{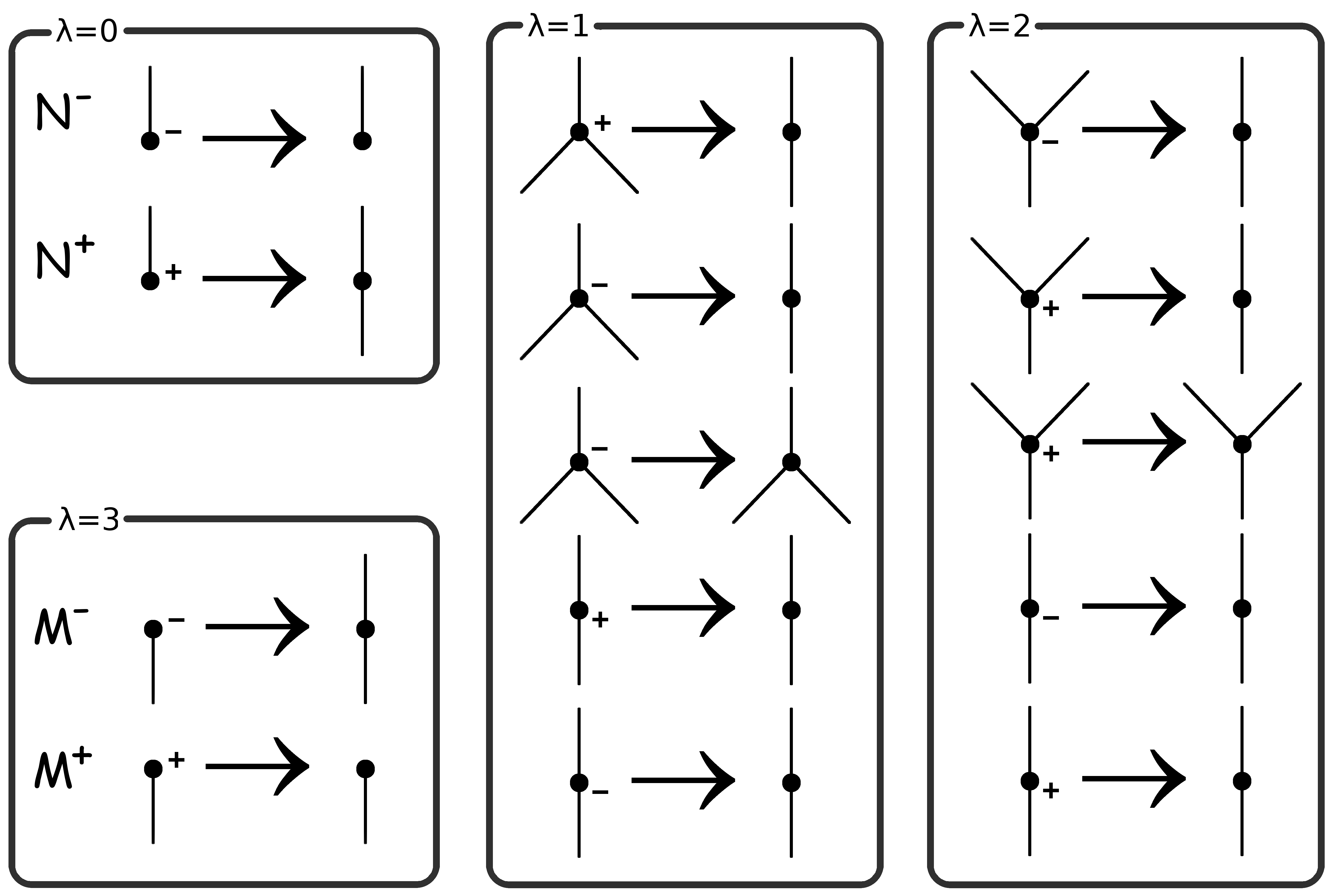}
\end{table}

M. Yamamoto\cite{Yamamoto} also gave results on this topic, one dimension lower: given a germ of a non-generic Morse function along $\partial S^1$, he found conditions for the existence of a non-singular extension $F$ to $D^2$ such that $F$ can be realized as an orthogonal projection in $R^2$. The situation is a bit different in that dimension, but we can expect, seeing his approach, that handling non-generic Morse function is not a totally trivial step anyway. Indeed, in that dimension, the approach used by Blank and Laudenbach\cite{BlankLaudenbach} consists of linking false extrema to true extrema for allowing the gradient lines coming from false extrema to arrive at critical points on the boundary, and not inside the domain of extension. With non-generic Morse functions, this linking process is more troublesome to make as we cannot link a false extremum to a true extremum belonging to the same level curve.

Lastly, the problem of finding non-singular extensions of germ of Morse functions can be used to describe the possible Morse functions on some closed manifold. Indeed, it is easy to see that searching for a non-singular Morse functions on a manifold with boundary is equivalent to searching for a Morse functions whose critical points are all on the boundary. Consider a closed manifold $M^n$ that can be realized as $M^n = N^n \cup_f N^n$ with $N^n$ being a manifold with boundary and $f$ gluing them along their boundary. A description of the non-singular Morse functions on $N^n$ gives a description of the Morse functions on $M^n$ (whose critical points are located in a sub-manifold $B^{n-1} \simeq \partial N^n$). The fact that the Theorem \ref{ThmAbstractVersion} and the other similar results in this area do not require the manifold to be connected is appreciable here ($\partial N^n$ does not have to be connected).

As an application of this last remark, we can see that the figures \ref{CollapseSample} and \ref{Example1} are the simplest extendable germs of Morse functions along $Kl^2$ in the sense that they both give rise to the Morse functions over the twisted product $S^2 \times S^1 \simeq \Kl \cup \Kl$ with the minimal number of critical points (four of them). We have an interesting relation between the topology of the level surfaces and the ordering of the Morse indices:\\
In the figure \ref{CollapseSample}, the critical points of the extension over the twisted $S^2 \times S^1$ are, from bottom to top, of Morse indices 0, 2, 1 and 3 and the level surfaces between them are diffeomorphic to $S^2$, $S^2 \coprod S^2$ and $S^2$.\\
In the figure \ref{Example1}, however, the Morse indices from bottom to top are 0, 1, 2 and 3 and the level surfaces in between are diffeomorphic to $S^2$, $Kl^2$ and $S^2$.

\renewcommand{\abstractname}{Acknowledgements}
\begin{abstract}
This work was made during the master degree of the author, financed by the university Paul-Sabatier of Toulouse, France.

\end{abstract}

\nocite{*}
\bibliographystyle{plain}
\bibliography{MorseExtension}

\end{document}